\newtheorem{theorem}{Theorem}[section]
\newtheorem{assumption}{Assumption}[section]
\newtheorem{lemma}[theorem]{Lemma}
\theoremstyle{definition}
\pgfplotsset{compat=1.11}
\tikzset{cross/.style={cross out, draw=black, minimum size=2*(#1-\pgflinewidth), inner sep=0pt, outer sep=0pt},
cross/.default={1pt}}
\title{D2NO: Efficient Handling of Heterogeneous Input Function Spaces with Distributed Deep Neural Operators}
\date{}
\author{
Zecheng Zhang\thanks{Department of Mathematics, Florida State University, Tallahassee, FL 32304.}\and
Christian Moya\thanks{Department of Mathematics, Purdue University, West Lafayette, IN 47907.}\and
Lu Lu\thanks{Department of Statistics and Data Science, Yale University, New Haven, CT 06511.}\and
Guang Lin\footnotemark[2]\and
Hayden Schaeffer\thanks{Department of Mathematics, UCLA, Los Angeles, CA 90095.} }
\begin{document}
\maketitle

\begin{abstract}
Neural operators have been applied in various scientific fields, such as solving parametric partial differential equations, dynamical systems with control, and inverse problems. However, challenges arise when dealing with input functions that exhibit heterogeneous properties, requiring multiple sensors to handle functions with minimal regularity. To address this issue, discretization-invariant neural operators have been used, allowing the sampling of diverse input functions with different sensor locations. However, existing frameworks still require an equal number of sensors for all functions. In our study, we propose a novel distributed approach to further relax the discretization requirements and solve the heterogeneous dataset challenges. Our method involves partitioning the input function space and processing individual input functions using independent and separate neural networks. A centralized neural network is used to handle shared information across all output functions. This distributed methodology reduces the number of gradient descent back-propagation steps, improving efficiency while maintaining accuracy. We demonstrate that the corresponding neural network is a universal approximator of continuous nonlinear operators and present four numerical examples to validate its performance.
\end{abstract}


\section{Introduction}
Operator Learning, introduced by \cite{chen1995universal}, involves learning operators that map one function to another. It extends classical function learning, which focuses on the mapping between vector spaces, and functional learning \cite{chen1993approximations}, which deals with functions mapping to fields. Operator Learning has become a pivotal tool in scientific machine learning \cite{karniadakis2021physics}. Recent works on this topic include various neural operators \cite{lu2021learning, li2020fourier, qian2022reduced, jin2022mionet, zhang2022belnet} and extensive studies on the approximation and convergence aspects of these neural operators \cite{chen1995universal, lanthaler2023curse, zhang2023discretization, deng2022approximation}. Additionally, applications of neural operators span various scientific and engineering domains. For example, researchers have developed algorithms that use neural operators to address parametric Partial Differential Equation (PDE) problems \cite{lu2021learning, zhu2023reliable, wang2021learning, mao2023ppdonet, mao2021deepm, cai2021deepm}, dynamical systems with control~\cite{lin2023learning,bhan2023operator, chen1995universal}, power engineering applications \cite{moya2023deeponet, sun2023deepgraphonet, ye2023deeponet}, design optimization problems~\cite{lu2022multifidelity, sahin2024deep}, and challenges related to multifidelity and multiscale downscaling \cite{howard2022multifidelity, lu2022multifidelity, zhang2023bayesian}. Notably, researchers have recently applied neural operators to climate predictions \cite{pathak2022fourcastnet} and uncertainty quantification \cite{lin2023b,psaros2023uncertainty,yang2022scalable}.

One of the popular approaches in this direction is the Deep Operator Neural Network (DeepONet) \cite{lu2021learning, lu2022multifidelity, lu2022comprehensive, zhu2023reliable, jin2022mionet,di2023neural,jiang2023fourier}. DeepONet utilizes two subnetwork architectures consisting of a branch network and a trunk network to approximate operators by constructing a basis for the output function space. The trunk networks are responsible for learning the basis, while the branch network acquires the basis coefficients. This results in an approximation achieved through a linear combination of the learned basis and coefficients. Another approach for neural operators is the Fourier neural operator (FNO) \cite{li2020fourier, zhu2023fourier, pathak2022fourcastnet}. FNO applies the Fourier neural transformation to convolve the input function with a learned translation invariant kernel function. It then utilizes skip connections and other mechanisms to nonlinearly generalize the approximation. A comprehensive comparison of DeepONet and FNO can be found in \cite{lu2022comprehensive}. Additionally, there are several other noteworthy neural operators documented, as cited here \cite{zhang2022belnet, qian2022reduced, zhang2023discretization, cao2022residual, liu2023prose}.

A significant advantage of DeepONet  \cite{lu2021learning, chen1995universal, jin2022mionet} and related structures \cite{zhang2022belnet, zhang2023discretization} is the ability to discretize output functions freely \cite{lin2021operator, lu2022comprehensive, zhang2022belnet}. The neural operator generates function values at specific domain points, which are determined by the trunk network input, resulting in enhanced adaptability. This flexibility allows the network to predict output function values at any domain point, accommodating different mesh definitions for output functions. However, a major challenge is achieving discretization invariance \cite{lu2022comprehensive, zhang2022belnet}. Specifically, the neural operator must remain invariant to the different discretizations applied to input functions. Several algorithmic extensions to DeepONet have been proposed, allowing different input functions to be discretized differently \cite{lu2022comprehensive, lin2021accelerated}. Additionally, the Basis Enhanced Learning (BelNet) extension of DeepONet, proposed by \cite{zhang2022belnet, zhang2023discretization}, has mathematically demonstrated discretization invariance. It is worth noting that these extensions still require the number of sensors used to sample input function values to be consistent across distinct input functions. In this paper, we introduce a novel and efficient distributed network architecture and training approach for neural operators. This approach permits individualized discretization of input functions using dedicated sensor distribution strategies.

Discretization invariance offers several advantages, particularly when working with datasets that have diverse properties \cite{zhu2023fourier}. For example, a dataset may consist of input functions with different levels of regularity. When using a uniform set of sensors to sample and discretize these input functions, it becomes necessary to accommodate the functions with the lowest regularity. This leads to a significant increase in the number of required sensors. However, it also results in higher costs for functions that are inherently smoother. To accommodate input functions with varying properties and regularity, we will draw inspiration from federated learning. Introduced in \cite{mcmahan2017communication}, federated learning is a machine learning framework that enables multiple clients to collaboratively train a consensus neural network model in a distributed manner, while keeping their training data local. This approach protects data privacy and reduces communication costs by coordinating local model updates with a central server, resulting in a significant reduction in data transfer volume.

A standard formulation of federated learning is a distributed optimization framework that addresses communication costs, client robustness, and data heterogeneity across different clients \cite{lsts20}. Communication efficiency is central to this formulation and motivates the most well-known efficient communication algorithm in federated learning: the federated averaging (FedAvg) algorithm~\cite{mcmahan2017communication}. FedAvg has been studied under realistic scenarios in~\cite{mcmahan2016federated,dean2012large}. Furthermore, several works have provided convergence proofs of the algorithm within the field of optimization~\cite{li2019convergence,li2021fedbn, khaled2019first,karimireddy2020scaffold,deng2021convergence}.

Traditionally, neural operators have been trained using a centralized strategy that involves transferring the training data to a central location. However, this approach hinders our ability to leverage high-performance distributed/parallel computing platforms. To address these limitations, the authors in~\cite{moya2022fed} proposed Fed-DeepONet. Fed-DeepONet enables distributed training of deep operator networks and has been empirically shown to achieve accuracy comparable to the centralized vanilla DeepONet while handling heterogeneous input functions. However, it should be noted that Fed-DeepONet requires the number of sensors for each local DeepONet to be the same, as it trains a consensus global DeepONet. In this paper, we provide a theoretical demonstration of Fed-DeepONet's ability to approximate nonlinear operators. Additionally, we design a dedicated distributed training strategy that does not require the same number of sensors, allowing the approach to handle heterogeneous input function spaces.

We summarize the contributions of this paper as follows:
\begin{enumerate}
    \item We propose the distributed deep neural operator (D2NO) method that allows for sampling different input functions using distinct sets of sensors. This training method can reduce the total number of back-propagation steps and the number of trainable parameters by utilizing dedicated neural networks to handle the diverse input function spaces.
    \item D2NO is motivated by the mathematical theory that underlies the algorithm. Since all functions share the same output basis, we construct dedicated networks to handle the diverse input functions. We then proceed to demonstrate the universal approximation of the network and algorithms we propose.
    \item The proposed methodology can be implemented using different neural operators. In this study, we apply it to the novel Deep Operator Neural Network (DeepONet). By utilizing the Distributed-DeepONet (D-DeepONet), we tackle complex heterogeneous datasets that contain input functions with different regularities and geometries. Our approach consistently improves predictive accuracy while maintaining efficiency.
\end{enumerate}
The rest of the paper is organized as follows. In Section \ref{sec_overview}, we will review the DeepONet and the universal approximation theory. Next, we present the methodology in Section \ref{sec_method}. Then, we prove the universal approximation theorem for Distributed-DeepONet in Section \ref{sec_theory}. Finally, we present several numerical experiments containing heterogeneous input function spaces in Section \ref{sec_numerical}.
\section{Overview}
\label{sec_overview}
\subsection{Deep Operator Neural Network (DeepONet)}
We start by defining an operator $G: V\rightarrow U$, where $V$ and $U$ are two function spaces. Next, we examine the Deep Operator Neural Network (DeepONet) \cite{lu2021learning, lu2022comprehensive, lu2022multifidelity, jin2022mionet, zhu2023reliable}. The DeepONet structure contains a branch and trunk network. The branch net processes a discretized input function, while the trunk network processes an arbitrary location within a given output function space. The DeepONet structure avoids needing to discretize the output function. This allows more flexibility in both the training and prediction processes, distinguishing it from neural operators that rely on output function discretization on fixed grid mesh.
\begin{figure}[H]
\centering
\scalebox{.78}{\begin{tikzpicture}[scale = 1]
 \fill [green!10] (2, 4.5) rectangle (7, 7.6);
  \fill [blue!10] (2, 1.4) rectangle (7, 4.4);
    \fill [red!20] (-3.5, 2) rectangle (1, 7);

    \draw[ultra thick] [decorate,
    decoration = {calligraphic brace, mirror}] (-3.5, 1.8) --  (1, 1.8);
\node at (-1.2, 1.4) {\textcolor{red}{Inputs}};

\fill [magenta!10] (9, 4) rectangle (12, 5);
    \draw[ultra thick] [decorate,
    decoration = {calligraphic brace, mirror}] (9, 4) --  (12, 4);
\node at (10.6, 3.5) {\textcolor{magenta}{Outputs}};

\node[draw, text width=4cm] at (-1.2, 6) {$u = [u(y_1), ..., u(y_N)]^\intercal$};

 \draw [-latex ](1,6) -- (2, 7);
 \node[draw, text width = 2.5cm] at (3.5, 7) {branch net $1$};
 \draw [-latex ](5, 7) -- (6, 7);
 \node[draw, text width = 0.5cm] at (6.5, 7) {$b_1$};
 
 \draw [-latex ](1,6) -- (2, 5);
  \node[draw, text width = 2.5cm] at (3.5, 5) {branch net $K$};
  \draw [-latex ](5, 5) -- (6, 5);
  \node[draw, text width = 0.5cm] at (6.5, 5) {$b_K$};
  
  \draw [-latex ](1,6) -- (2, 6);
 \node[draw, text width=2.5cm] at (3.5, 6) {branch net $j$};
 \draw [-latex ](5, 6) -- (6, 6);
\node[draw, text width = 0.5cm] at (6.5, 6) {$b_j$};

 \node[draw, text width = 2cm] at (-1, 3) {$x\in\mathbb{R}^d$};
  \draw [-latex ](1, 3) -- (2, 3);
 \node[draw, text width = 2.5cm] at (3.5, 3) {trunk net};
 
 \draw [-latex ](5, 3) -- (6, 4);
\node[draw, text width = 0.5cm] at (6.5, 4) {$t_1$};

  \draw [-latex ](5, 3) -- (6, 3);
\node[draw, text width = 0.5cm] at (6.5, 3) {$t_j$};

   \draw [-latex ](5, 3) -- (6, 2);
 \node[draw, text width = 0.5cm] at (6.5, 2) {$t_K$};

\draw [-latex ](7, 6) -- (9, 4.5);
  \draw [-latex ](7, 3) -- (9, 4.5);

  \node[draw, text width = 2.5cm] at (10.5, 4.5) {$\bigotimes\rightarrow G(u)(x)$};

\end{tikzpicture}}
\caption{Stacked version DeepONet. $\bigotimes$ denotes the inner product in $\mathbb{R}^K$.}
\label{fig_don_structure}
\end{figure}
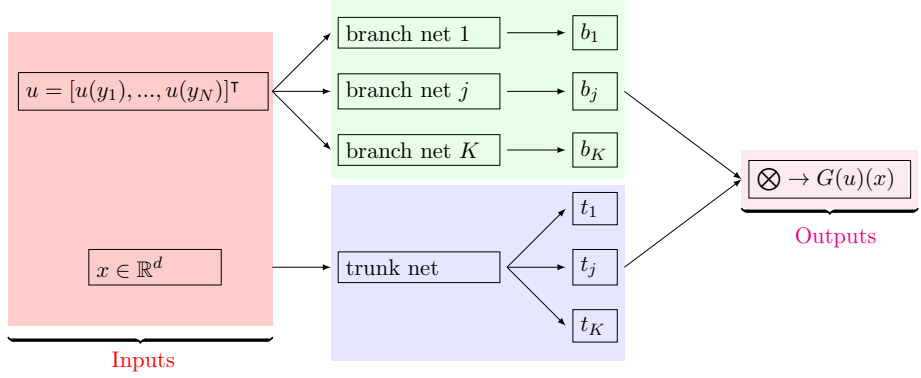

The DeepONet is theoretically based on the universal approximation theorem of nonlinear operators, which was established in \cite{chen1995universal, chen1993approximations, lu2021learning}, and extended in \cite{lu2021learning,zhang2023discretization}. The main results are stated below.
\begin{theorem}[Theorem 5, \cite{chen1995universal}]
Suppose $G: V\rightarrow U$ is continuous. Specifically, let $V$ be compact in the continuous function space $C(K_1)$, where $K_1$ is compact in a Banach space. Additionally, suppose $G(V)\subset C(K_2)$, where $K_2\subset\mathbb{R}^d$ is also compact. Then, for any $\varepsilon$, there exist sensors $\{x_i\}_{i = 1}^N\subset K_2$, networks $p_k: \mathbb{R}^N\rightarrow \mathbb{R}$, and $b_k: K_2\rightarrow \mathbb{R}$ such that,
\begin{align*}
    |G(u)(x) - \sum_{k = 1}^K p_k(\hat{u})b_k(x)| <\varepsilon,
\end{align*}
for all $x\in K_2$ and $u\in V$. Here $\hat{u} = [u(x_1), ..., u(x_N)]^\top$.
    \label{theorem_chen}
\end{theorem}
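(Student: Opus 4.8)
The plan is to reduce the operator statement to two classical ingredients: the universal approximation of continuous \emph{functionals} on a compact subset of $C(K_1)$, and a discretization of the output variable $x$ that is uniform over all $u \in V$. The separable target form $\sum_k p_k(\hat u) b_k(x)$ suggests treating the $u$-dependence (carried by the coefficients $p_k(\hat u)$) and the $x$-dependence (carried by the basis $b_k$) independently, so I would build the proof in exactly that order.

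First I would establish the functional layer. Since $V$ is compact in $C(K_1)$, the Arzel\`a--Ascoli theorem gives that $V$ is uniformly bounded and equicontinuous. Equicontinuity lets me choose a single finite sensor set $\{x_i\}_{i=1}^N$ (lying in the \emph{input} domain $K_1$, where each $u$ is defined, rather than in $K_2$) together with a continuous reconstruction map so that the evaluation vector $\hat u = [u(x_1),\dots,u(x_N)]^\top$ determines $u$ uniformly up to an arbitrarily small sup-norm error, \emph{simultaneously} for every $u \in V$. Consequently, for a fixed output point $x$, the functional $u \mapsto G(u)(x)$ factors, up to small error, through the finite-dimensional data $\hat u$, which ranges over a compact subset of $\mathbb{R}^N$. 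The classical Cybenko/Hornik universal approximation theorem for continuous functions on compact sets then yields a network $p : \mathbb{R}^N \to \mathbb{R}$ with $|G(u)(x) - p(\hat u)|$ small uniformly in $u$.

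Second I would handle the output variable. Because $G$ is continuous and $V$ compact, the image $G(V)$ is compact in $C(K_2)$ and hence equicontinuous, which produces a modulus of continuity in $x$ that is uniform over $u$. I would then fix a $\delta$-net $\{z_1,\dots,z_K\} \subset K_2$ and a subordinate continuous partition of unity $\{\beta_k\}_{k=1}^K$, so that
\begin{align*}
    \Big| G(u)(x) - \sum_{k=1}^K G(u)(z_k)\,\beta_k(x) \Big| < \varepsilon/3
\end{align*}
uniformly in $u \in V$ and $x \in K_2$. Each coefficient $u \mapsto G(u)(z_k)$ is a continuous functional, approximated by a network $p_k(\hat u)$ through the functional layer above using the \emph{same} sensor set $\{x_i\}_{i=1}^N$, and each continuous $\beta_k$ is approximated by a network $b_k : K_2 \to \mathbb{R}$ via the standard function-approximation theorem. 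Collecting the three error contributions and using boundedness of the finitely many factors gives the claimed bound $|G(u)(x) - \sum_{k=1}^K p_k(\hat u) b_k(x)| < \varepsilon$.

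The main obstacle is making every approximation uniform \emph{simultaneously} in the infinite-dimensional variable $u$ and the finite-dimensional variable $x$. The crucial leverage is compactness: it is precisely the equicontinuity of $V$ that allows one fixed sensor set to faithfully encode \emph{all} input functions at once rather than a $u$-dependent set, and the equicontinuity of $G(V)$ that makes one fixed partition $\{z_k,\beta_k\}$ serve all $u$. A secondary technical point is that the reconstruction of $u$ from its samples need not be injective, so the intermediate finite-dimensional functional must be defined and continuously extended on the compact sample set before the scalar universal approximation theorem can be invoked.
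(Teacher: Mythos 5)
Your proof is correct, and its skeleton is the same as the one the paper uses (via the two lemmas it quotes from Chen and Chen in Section 4): first separate variables by writing $G(u)(x)$ as a finite sum of $u$-dependent functional coefficients times $x$-dependent basis functions, then discretize each functional through a fixed sensor vector $\hat u$ and invoke scalar universal approximation. Where you genuinely diverge is in how the first stage is obtained. The paper cites Chen--Chen Theorem~3 (Lemma~\ref{chen_theorem3}) as a black box, which directly supplies continuous \emph{linear} functionals $a_k$ and ridge-type basis functions $\tau_k(x)=g(w_k\cdot x+p_k)$; you instead construct the decomposition by hand from a $\delta$-net of $K_2$ and a subordinate partition of unity, taking the coefficients to be the point evaluations $u\mapsto G(u)(z_k)$ (which are themselves continuous linear functionals on $C(K_2)$, so the two choices are close cousins), and then pay one extra error term to replace each $\beta_k$ by a network $b_k$. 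Your route is more elementary and makes explicit the compactness mechanism that delivers uniformity in $u$ --- equicontinuity of $V$ for the sensor set and equicontinuity of $G(V)$ for the net $\{z_k\}$ --- whereas the paper's route is more modular and yields the specific single-hidden-layer form of $b_k$ for free. Two further points in your favor: you correctly place the sensors in the input domain $K_1$ rather than $K_2$ (the statement's $\{x_i\}_{i=1}^N\subset K_2$ is a typo, since $\hat u$ samples $u\in C(K_1)$), and you correctly flag that $u\mapsto\hat u$ is not injective, so the induced finite-dimensional functional must be shown to be well defined up to a small error and continuously extended before Cybenko/Hornik can be applied --- this is exactly the technical content of Chen--Chen Theorem~4 that the paper's Lemma~\ref{chen_theorem4} absorbs.
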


\subsection{Discretization-Invariant Extension of DeepONet}
Theorem \ref{theorem_chen} presents an existence argument stating that the sensors are the same across all input functions. This implies that all distinct input functions must be discretized on the same mesh. However, this limitation hinders the use of DeepONet, especially in cases where the dataset exhibits heterogeneous properties (for a demonstration, refer to the numerical experiments section \ref{sec_numerical}). A neural operator is considered discretization-invariant if it remains unaffected by variations in input function discretizations. In \cite{zhang2022belnet, zhang2023discretization}, an extension of DeepONet was shown to be discretization-invariant, and specifically, the authors proved the following theorem:
\begin{theorem}[Discretization-Invariance \cite{zhang2022belnet, zhang2023discretization}]
Suppose that $a \in TW$, $Y$ is a Banach space, $K_1\subset Y$, and $K_2\subset \mathbb{R}$ are all compact. Let $V\subset C(K_1)$ be a compact set and $G: V \rightarrow C(K_2)$ be a continuous and nonlinear operator. For any $\epsilon>0$, there exist integers $N,C,K,I$, weights and biases $W_x^k\in\mathbb{R}^{d}$, $b_x^k\in\mathbb{R}$, $W^k\in\mathbb{R}^{I\times C}$, $b^k\in\mathbb{R}^{I}$, $c^k\in\mathbb{R}^{I}$, subset of sensors $K_y\subset K^N_1$ and a trainable network $\mathcal{N}: K_y\rightarrow \mathbb{R}^{C\times N}$ with $K_y\subset K_2$.
Then the following inequality holds
\begin{align*}
    \left|
G\left(u\right)(x) - \sum_{k = 1}^K a(W_x^k\cdot x+ b^k_x) (c^k)^\top a\big(W^k\mathcal{N}(\hat{y})u(\hat{y}) + b^k \big)
    \right|<\epsilon,
\end{align*}
for all $x\in K_2$, $\hat{y} = [y_1, y_2, ..., y_N]^\top \in K_y$, and $u\in V$.
\end{theorem}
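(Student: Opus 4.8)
The plan is to reduce the statement to Theorem~\ref{theorem_chen} together with two approximation steps, the new ingredient being the reconstruction network $\mathcal{N}$. First I would fix a family of \emph{reference sensors} $\{z_c\}_{c=1}^{C}\subset K_1$ (playing the role of Chen--Chen's fixed sensors) and apply Theorem~\ref{theorem_chen} to obtain $\bigl|G(u)(x)-\sum_{k=1}^{K}p_k(\tilde u)\,b_k(x)\bigr|<\epsilon/3$ for all $u\in V$ and $x\in K_2$, where $\tilde u=[u(z_1),\dots,u(z_C)]^\top$, the $p_k:\mathbb{R}^{C}\to\mathbb{R}$ are continuous functionals, and the $b_k\in C(K_2)$ are the trunk basis functions. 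Since each $b_k$ is continuous, a standard shallow re-expansion $b_k(x)\approx\sum_j\alpha_{kj}\,a(w_{kj}\cdot x+\zeta_{kj})$ lets me take the trunk basis to be the single neurons $a(W_x^k\cdot x+b_x^k)$ appearing in the target, after relabelling $(k,j)\mapsto k$ and absorbing the coefficients $\alpha_{kj}$ into the branch functionals (which remain continuous in $\tilde u$). This reduces everything to reproducing each branch coefficient $p_k(\tilde u)$ by the discretization-invariant expression $(c^k)^\top a\bigl(W^k\mathcal{N}(\hat y)u(\hat y)+b^k\bigr)$, uniformly in $\hat y$.

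The core step is the construction of $\mathcal{N}$, which must convert samples taken at an \emph{arbitrary} admissible configuration $\hat y=[y_1,\dots,y_N]^\top$ into the fixed reference encoding $\tilde u$. I would first build an ideal linear reconstruction $M(\hat y)\in\mathbb{R}^{C\times N}$, for instance via a partition-of-unity or local interpolation rule whose weights depend continuously on $\hat y$, such that $M(\hat y)\,u(\hat y)\approx\tilde u$ uniformly over $u\in V$. Here the compactness of $V$ in $C(K_1)$ is essential: it yields a shared modulus of continuity and a uniform bound for all $u\in V$, so that interpolating the values $u(z_c)$ from samples at a sufficiently dense $\hat y$ incurs an error controlled uniformly by that modulus. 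I would then approximate the continuous matrix-valued map $\hat y\mapsto M(\hat y)$ entrywise over the compact configuration set $K_y$ by a network $\mathcal{N}$, so that $\|\mathcal{N}(\hat y)-M(\hat y)\|$ is uniformly small on $K_y$.

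The third step is the branch functional. Because $V$ is bounded, $\{\tilde u:u\in V\}$ is a compact subset of $\mathbb{R}^{C}$ and $\{\mathcal{N}(\hat y)u(\hat y)\}$ stays in a fixed compact set; each continuous $p_k$ can therefore be approximated there by a shallow network $q_k(v)=(c^k)^\top a(W^k v+b^k)$ with $a\in TW$. I would then chain the errors for each $k$ by the triangle inequality, splitting $|p_k(\tilde u)-q_k(\mathcal{N}(\hat y)u(\hat y))|$ into (i) the reconstruction error $|p_k(\tilde u)-p_k(M(\hat y)u(\hat y))|$, bounded using uniform continuity of $p_k$ and the bound from the reconstruction step; (ii) the functional-approximation error $|p_k-q_k|$ on the relevant compact set; and (iii) the perturbation $|q_k(M(\hat y)u(\hat y))-q_k(\mathcal{N}(\hat y)u(\hat y))|$, bounded via uniform continuity of $q_k$, the uniform bound on $\|u(\hat y)\|$, and the smallness of $\|\mathcal{N}(\hat y)-M(\hat y)\|$. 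Summing over the finitely many $k$ and adding the trunk/Chen--Chen error gives the final bound below $\epsilon$.

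The main obstacle is the uniform reconstruction itself: producing a \emph{single} map $\mathcal{N}$ that succeeds simultaneously for every sensor layout $\hat y\in K_y$ and every $u\in V$, and guaranteeing that the ideal weights $M(\hat y)$ are genuinely continuous and bounded in $\hat y$ so that universal approximation applies. This is precisely where the compactness of $V$ (equicontinuity, hence a common modulus of continuity) and of $K_y$, together with a density/admissibility requirement on the sensor configurations, cannot be dispensed with; absent a shared modulus of continuity the reconstruction error could not be bounded uniformly across $V$. Once this lemma is in place, the remaining pieces are routine applications of the universal approximation of functions and functionals and of the triangle inequality.
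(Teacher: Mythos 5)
This theorem is imported verbatim from the BelNet references and is \emph{not} proved in this paper, so there is no in-paper proof to compare against; the paper only remarks that the admissible sensor set $K_y$ ``must be carefully crafted'' (Remarks 3 and 4 of the cited work). Measured against the cited proof, your proposal reconstructs the right overall architecture: reduce to Theorem~\ref{theorem_chen} with fixed reference sensors, interpose a reconstruction map $M(\hat y)$ with $M(\hat y)u(\hat y)\approx \tilde u$, approximate $\hat y\mapsto M(\hat y)$ by the network $\mathcal{N}$, approximate the branch functionals by shallow $TW$ networks, and chain the errors by the triangle inequality. You also correctly identify the crux --- a single $\mathcal{N}$ working uniformly over all $\hat y\in K_y$ and all $u\in V$, which is exactly what forces the admissibility condition on $K_y$. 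The one substantive divergence is in how $M(\hat y)$ is built: you propose a partition-of-unity or local interpolation rule, which requires $\hat y$ to be dense relative to the common modulus of continuity of the compact set $V$; the cited argument instead exploits compactness of $V$ to pass to a finite-dimensional $\epsilon$-approximating subspace and takes $M(\hat y)$ to be a (pseudo-)inverse of the sampling matrix on that subspace, so that $K_y$ need only consist of unisolvent configurations with uniformly bounded inverses. Your route is valid but yields a more restrictive $K_y$ (many sensors, densely placed); the unisolvency route is what lets BelNet claim discretization invariance with comparatively few, flexibly placed sensors. Two minor points: the re-expansion of $b_k$ into single neurons is unnecessary if you invoke Lemma~\ref{chen_theorem3}, which already delivers the trunk in the form $a(W_x^k\cdot x+b_x^k)$; and when you absorb the coefficients $\alpha_{kj}$ into the branch you should note that the resulting trunk error is multiplied by $\sup_{u\in V}|p_k(\tilde u)|$, which is finite by compactness but needs to be said for the $\epsilon/3$ budget to close.
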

The basis enhanced learning (BelNet) extension partially relaxes the restriction on the input function discretization. However, the number of sensors (input function mesh size) should still be the same for all input functions. Additionally, $K_y\subset K$ which supports the discretization-invariant property must be carefully crafted (please check Remarks 3 and 4 in \cite{zhang2023discretization}). This limits the sensor placement and input function discretizations. 

When dealing with datasets that have input functions with different properties, it is important to place sensors that can adapt to functions with the lowest regularity within the dataset. Figure \ref{fig_different_geom} illustrates various functions with different regularities. To effectively capture functions with features like black curves with sharp peaks, a large number of sensors must be used. Similarly, for the magenta curves with smooth oscillations, sensors need to be strategically placed across the entire domain. Using a uniform sensor configuration for all input functions would require a significant increase in the number of sensors. This would result in higher computational costs and additional expenses, especially for smoother functions. We propose the D2NO which can effectively address this problem.

\subsection{Distributed/Federated Training of DeepONets}
In~\cite{moya2022fed}, the authors proposed Fed-DeepONet, a distributed/federated strategy that trains a consensus/universal DeepONet model for $C$ clients and a centralized server. This federated training strategy involves looping over the following three steps:
\begin{enumerate}
    \item \textit{Broadcast to clients:} The centralized server broadcasts the most up-to-date DeepONet consensus model to all clients.
    \item \textit{Client local updates:} For any $c \in \{1,2,\ldots,C\}$, the $c$-th client receives the most up-to-date DeepONet consensus model and performs $K \ge 1$ local stochastic gradient (or variants, e.g., Adam) DeepONet updates using the client's dedicated dataset.
    \item \textit{Global synchronization:} The centralized server aggregates the local DeepONet parameters into a unique consensus/universal DeepONet every $K$ local updates.
\end{enumerate}
Since Fed-DeepONet learns a consensus model, each client must use the same number of sensors. In the next section, we will expand on the previously mentioned federated training strategy and develop a dedicated distributed training strategy for DeepONets with $C$ clients. This strategy will efficiently handle heterogeneous input function spaces by allowing different numbers of sensors for each client $c \in \{1,2,\ldots,C\}$.
\section{Methodology}
\label{sec_method}
Consider the nonlinear operator $G$ that maps an input function $u \in V$ to an output function $G(u)(x)$ evaluated at an arbitrary location $x \in K_2$. According to Theorem \ref{theorem_chen}, we can approximate $G(u)(x)$ as $\sum_{k = 1}^K p_k(u)b_k(x)$, where $b_k(x)$ and $p_k(u)$ are learnable functions. In the DeepONet literature, $p_k(\cdot)$ is commonly referred to as the branch net, while $b_k(\cdot)$ is known as the trunk network. To simplify our notation, we will omit the parameters of the learnable functions whenever possible. It is important to note that $b_k(x)$ can be seen as the basis of the output function space, which is a shared element among all $u$ and is independent of $u \in V$. On the other hand, $p_k(u)$ represents the coefficients in the linear combination, which depend on the specified input function $u$. Consequently, the approach taken is to develop a dedicated distributed approach that shares $b_k(x)$ across all samples to create a universal/consensus trunk network model. In this dedicated approach, individual clients independently manage their respective input function datasets using their branch nets (or projection in BelNet). They send their individual basis $b_k(x)$ to a centralized server to learn a universal/consensus model applicable to all samples. Please refer to Figure \ref{fig_federated_demo} for an illustration and a comparison with classical training (Figure \ref{fig_classical_demo}).
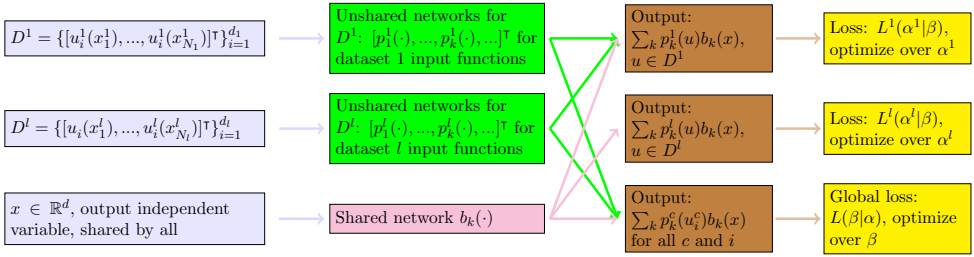
\begin{figure}[ht]
\centering
\scalebox{.6}{\begin{tikzpicture}[scale = 1]

\node[draw, text width = 5.5cm, fill = blue!10] at (-1.2, 7) {$D^1 = \{[u_i^1(x_1^1), ..., u_i^1(x_{N_1}^1)]^\intercal \}_{i = 1}^{d_1}$};

\node[draw, text width = 5.5cm, fill = blue!10] at (-1.2, 5) {$D^{l} = \{[u_i^{}(x_1^{l}), ..., u_i^{l}(x_{N_{l}}^{l})]^\intercal \}_{i = 1}^{d_{l}}$};

 \draw [blue!15, ultra thick, -> ](2, 5) -- (3, 5);

 \draw [blue!15, ultra thick, -> ](2, 7) -- (3, 7);

 \node[draw, text width = 4.5cm, fill = green] at (5.5, 7) {Unshared networks for $D^1$: $[p^1_1(\cdot),..., p^1_k(\cdot),...]^{\intercal}$ for dataset 1 input functions};

 \node[draw, text width = 4.5cm, fill = green] at (5.5, 5) {Unshared networks for $D^{l}$: $[p^{l}_1(\cdot), ..., p^{l}_k(\cdot), ...]^{\intercal}$ for dataset $l$ input functions};

\draw [green, ultra thick, -> ](8, 7) -- (9.5, 7);
\draw [green, ultra thick, -> ](8, 5) -- (9.5, 7);

\node[draw, text width = 3cm,  fill = brown] at (11.3, 7) {Output: $\sum_{k} p^1_k(u)b_k(x)$, $u\in D^1$ };
\node[draw, text width = 3cm,  fill = brown] at (11.3, 5) {Output: $\sum_{k} p^l_k(u)b_k(x)$, $u\in D^l$ };

\draw [green, ultra thick, -> ](8, 7) -- (9.5, 3);
\draw [green, ultra thick, -> ](8, 5) -- (9.5, 3);

 \node[draw, text width = 5.5cm, fill = blue!10] at (-1.2, 3) {$x\in\mathbb{R}^d$, output independent variable, shared by all};
 \draw [blue!15, ultra thick, ->  ](2, 3) -- (3, 3);
 \node[draw, text width = 4.5cm,  fill = magenta!30] at (5.5, 3) {Shared network $b_k(\cdot)$};

\draw [magenta!30, ultra thick, -> ](8, 3) -- (9.5, 7);
\draw [magenta!30, ultra thick, -> ](8, 3) -- (9.5, 5);
\draw [magenta!30, ultra thick, ->](8, 3) -- (9.5, 3);

 \node[draw, text width = 3cm, fill = brown] at (11.3, 3) {Output: $\sum_{k}p_k^c(u^c_i)b_k(x)$ for all $c$ and $i$};

 \draw [brown!50, ultra thick, -> ](13, 7) -- (14, 7);
\draw [brown!50, ultra thick, -> ](13, 5) -- (14, 5);
\draw [brown!50, ultra thick, ->](13, 3) -- (14, 3);

\node[draw, text width = 3cm,  fill = yellow] at (15.7, 7) {Loss: $L^1(\alpha^1|\beta)$, optimize over $\alpha^1$  };
\node[draw, text width = 3cm,  fill = yellow] at (15.7, 5) {Loss: $L^l(\alpha^l|\beta)$, optimize over $\alpha^l$ };
 \node[draw, text width = 3cm, fill = yellow] at (15.7, 3) {Global loss: $L(\beta|\alpha)$, optimize over $\beta$};

\end{tikzpicture}}
\caption{D2NOprocess demonstration. We process the sub-dataset $D^l$ by its dedicated branch nets (green box), together with the shared basis $b_k$  (pink box), we can construct the local loss $L^l$, and use the local loss to optimize the specialized branch net parameters. The global loss construction relies on the entire dataset as it is used to update the trunk net parameters (pink box) which are shared across different clients.}
\label{fig_federated_demo}
\end{figure}

\begin{figure}[ht]
\centering
\scalebox{.6}{\begin{tikzpicture}[scale = 1]

\node[draw, text width = 4.5cm, fill = blue!10] at (-1.2, 5) {Input functions of all datasets $\bigcup\limits_{i}D^i$};

 \draw [blue!15, ultra thick, -> ](1.3, 5) -- (3, 5);

 \node[draw, text width = 5.5cm, fill = green] at (6, 5) {Branch net $p_k(u)$, dependent on the input functions};

\node[draw, text width = 6cm,  fill = yellow] at (14., 4.) {Network output: $\sum\limits_{k}p_k(u)b_k(x)$. Loss function: $\sum\limits_{u, x}\|\sum\limits_{k}p_k(u)b_k(x) - G(u)(x)\|_2^2$, the summation is over \textcolor{blue}{ALL $u$ and $x$} };

\draw [green, ultra thick, -> ](9, 5) -- (10.5, 4);

 \node[draw, text width = 4.5cm, fill = blue!10] at (-1.2, 3) {$x\in\mathbb{R}^d$};
 \draw [blue!15, ultra thick, ->  ](1.3, 3) -- (3, 3);
 \node[draw, text width = 5.5cm,  fill = magenta!30] at (6, 3) {Construction net $b_k(x)$, independent of the input functions};

\draw [magenta!30, ultra thick, ->](9, 3) -- (10.5, 4);

\end{tikzpicture}}
\caption{Classical DeepONet training process demonstration.}
\label{fig_classical_demo}
\end{figure}
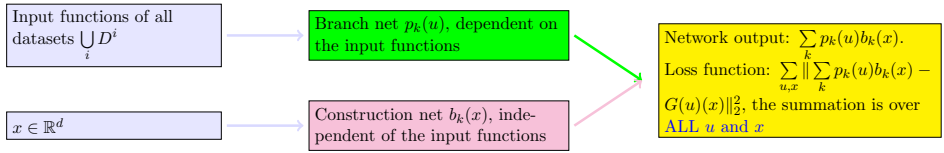

This means that each client independently manages input functions in its dataset through its unique branch (or projection) nets. Additionally, they transmit their local trunk networks to the central server responsible for acquiring and sending the universal/consensus basis $b_k(x)$ across all samples. Since each dataset is managed by its individual branch (or projection), it becomes possible to use designated sensors tailored to the input function within each dataset.

\textit{Distributed Deep Neural Operator (D2NO).} 
Let $D$ denote the entire heterogeneous dataset,  and $D^1, ...., D^C$ to be sub-datasets. Moreover, each $D^i$ exclusively comprises functions belonging to a single class, sharing similar regularity assumptions. Let us denote $\alpha_c$ as the trainable parameters of $c-$th client's dedicated neural network.
Let $\beta$ denote the trainable parameter of the shared network. Thus in this task, $\alpha_c$ are the branch net parameters, and $\beta$ are the trunk net parameters. 
We then denote $\alpha = \bigcup\limits_{c = 1}^C\alpha^c$ and $\theta = \alpha\bigcup \beta$.
The local loss function for $c-$th client based on its corresponding dataset $D^c$ will be:
\begin{align}
        L^c(\alpha^c\big| \beta) = \sum_{i = 1}^{N_c} \| G(u^c_i) - G_{\alpha^c, \beta}(\hat{u}^c_{i}) \|^2,
\end{align}
where $N_c$ denotes the number of input functions in dataset $D^c$, $u^c_i$ and $\hat{u}_i^c$ are the functions and its discretization. 
Please note that we normalize the loss function with the number of samples during the implementation, and present the unnormalized version for a better illustration of the concept.
We can then define the global loss function,
\begin{align*}
    L(\beta \big|\alpha) = \sum_{c = 1}^C L^c(\alpha^c|\beta),
\end{align*}
where $C$ denotes the total number of clients and datasets. We now summarize the pseudo-algorithm in Algorithm \ref{algo_sharing_trunk} with an illustration in Figure \ref{fig_federated_demo}.

\begin{algorithm}[h]
\caption{Distributed Deep Neural Network (D2NO) Algorithm}
\label{algo_sharing_trunk}
Initialization: weights $w_c$, all networks trainable parameters $\theta$, learning rate $\eta_c$ and $\eta$, the total number of iterations $N$\;
    \For{$k = 1$ to $N$}
    {
    Update the unshared parameters $\alpha^c_k$ for all clients, $k$ denotes the $k-$th optimization step\;
\For{$c = 1$ to $C$}
{
$$
\alpha_k^c = \alpha_k^c - \eta_c\nabla_{\alpha^c} L^c(\alpha^c|\beta),
$$
}
Update the shared construction net parameter $\beta$
$$
\beta_k = \beta_k  - \eta\nabla_{\beta} L(\beta|\alpha),
$$
}
\end{algorithm}
\subsection{Efficiency of Distributed Deep Neural Operator (D2NO)} 
This section provides a brief analysis of the efficiency of the proposed D2NO in terms of data and computational cost. To train scientific machine learning models, a significant amount of data is needed, much of which could be privacy-sensitive. Surrogates trained using this data have the potential to greatly enhance scientific and engineering discovery and simulation. However, transferring this data to a centralized location is costly, and the quantity and security of the data may make training in such a location impractical using the traditional DeepONet approach. The proposed D2NO addresses this issue by allowing the training data to remain distributed locally. Instead of sharing large datasets, the client only shares Deep Neural Operator parameters (or a partial model, e.g., the trunk network), which are aggregated using locally computed updates and which reduce training costs. 

To compare the computational training costs of the traditional DeepONet and D2NO, we proceed as follows. First, consider a centralized model with a vector of trainable parameters $\theta \in \mathbb{R}^p$.  Without loss of generality, let us assume we select one point $x^{(i)}$ in each output function's domain, we then minimize the following loss function,
$$
\mathcal{L}(\theta)  =  \frac{1}{N}\sum_{i=1}^N \mathcal{L}^{(i)} =  \frac{1}{N}\sum_{i=1}^N \left|G_\theta(\hat{u}_{i})(x^{(i)}) - G(u_{i})(x^{(i)})  \right|^2
$$

over the dataset of $N$ triplets $\mathcal{D}_{\text{cent}} = \{(u_m^{(i)}, x^{(i)}),G(u^{(i)})(x^{(i)})\}_{i=1}^N$, a standard deep learning package can compute the gradient of the loss function $\mathcal{L}$ with respect to each trainable parameter in $\beta$. In this analysis, we will focus on the number of gradients computed for each data loss function $\mathcal{L}^{(i)}$, where $i=1,\ldots,N$, with respect to the trainable parameters. This results in a total of $N \cdot p \cdot n_{\text{epochs}}$ gradient computations over $n_\text{epochs}$ of training. This analysis does not take into account batching, but extending it to the stochastic setting is direct.

For the D2NO case, we assume that the data is distributed among $C$ clients, so that $N=\sum_{j=1}^{C}n_j$, where $n_j$ represents the number of data points available to the $j$-th client for training a local model with a vector of $p' \le p$ trainable parameters. Therefore, for the $j$-th client, the number of gradients computed for each data loss function over $n_\text{epochs}$ is $n_j\cdot p' \cdot n_\text{epochs}$. As a result, we can establish the following inequality:
\begin{align*} \text{Cost of D2NO} &= \sum_{j=1}^{C} n_j \cdot p' \cdot  n_\text{epochs} \\ &= N \cdot p' \cdot n_\text{epochs} \\ &\le N \cdot p \cdot n_\text{epochs} \\ &= \text{Cost of Traditional DeepONet.} \end{align*}
The inequality above indicates that D2NO has a training cost similar to that of the traditional DeepONet. Furthermore, this inequality is strict for certain examples presented later in this paper. For instance, when comparing the proposed D2NO (with two clients having a number of sensors $m_1=10$ and $m_2=100$) to the traditional DeepONet, which uses $m=100$ sensors, they both perform an equal number of stochastic gradient updates and use the same amount of training data samples. However, our proposed framework is more efficient because over half of these stochastic gradient updates reduce the input function size by 10x, resulting in a decrease in the computational cost of training.

We conclude this section with the following remark. Although Algorithm \ref{algo_sharing_trunk} uses multiple client networks, which increases the total number of trainable parameters, it is important to note that, as demonstrated in this section, the total number of gradient updates does not increase.

\section{Theoretical Analysis}
\label{sec_theory}
In this section, we demonstrate that the proposed D2NO satisfies a universal approximation theorem. We begin by stating the following lemma~\cite{chen1995universal}, which establishes a universal approximation theorem for functions in a compact subspace with compact support.
\begin{lemma}[\cite{chen1995universal}, Theorem 3]
        Suppose $H$ is compact in $\mathcal{R}^d$, $U\subset C(H)$ is also compact. Let $f\in U$ and for any $\epsilon>0$, there exists an integer $K>0$ independent of $f$,  continuous linear functionals $c_k$ on $U$, networks $b_k: \mathbb{R}^{d} \rightarrow \mathbb{R}$ such that
    \begin{align*}
        \left|f(y) - \sum_{k = 1}^Kc_k(f)b_k \right|<\epsilon,
\end{align*}
for all $y\in H$ and $f\in U$. Specifically, the networks have the following structure, 
\begin{align}
    b_k = g(w_k\cdot y+p_k),
    \label{equation_network_function}
\end{align}
where $w_k\in\mathbb{R}^d$, $p_k\in\mathbb{R}$ and $g$ is a Tauber-Wiener function \cite{chen1995universal}.
\label{chen_theorem3}
\end{lemma}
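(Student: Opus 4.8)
The plan is to reduce the uniform approximation over the compact family $U$ to two ingredients: a \emph{sampling step} that replaces each $f$ by its values at finitely many fixed points through a partition of unity, and a \emph{network step} that replaces the partition functions by Tauber--Wiener networks. The crucial observation is that the compactness of $U$ in $C(H)$ lets us fix the number of terms $K$ and the sampling points once and for all, independently of $f$, while keeping the coefficients linear in $f$.

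First I would invoke Arzel\`a--Ascoli: since $U$ is compact in $C(H)$ and $H$ is compact in $\mathbb{R}^d$, the family $U$ is uniformly bounded, say $\|f\|_\infty \le B$ for all $f\in U$, and \emph{equicontinuous}, with a common modulus of continuity $\omega$. Given $\epsilon>0$, pick $\delta>0$ so that $\omega(\delta)<\epsilon/2$, cover $H$ by finitely many open sets of diameter less than $\delta$, and build a continuous partition of unity $\{\phi_i\}_{i=1}^m$ subordinate to this cover, choosing a representative point $y_i$ in the support of each $\phi_i$. Because $\sum_i\phi_i\equiv 1$ and $\phi_i(y)\neq 0$ forces $|y-y_i|<\delta$, one gets
\begin{align*}
    \Big| f(y) - \sum_{i=1}^m f(y_i)\phi_i(y) \Big|
    = \Big| \sum_{i=1}^m \big(f(y)-f(y_i)\big)\phi_i(y)\Big|
    \le \omega(\delta) < \frac{\epsilon}{2},
\end{align*}
uniformly for all $y\in H$ and all $f\in U$. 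The point is that $m$ and the points $y_i$ depend only on $\omega$ and $\epsilon$, hence are independent of $f$, and that each coefficient $f\mapsto f(y_i)$ is a point evaluation, i.e.\ a continuous linear functional on $C(H)$.

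Next I would use the defining density property of the Tauber--Wiener function $g$: the linear span of $\{\,g(w\cdot y+p):w\in\mathbb{R}^d,\,p\in\mathbb{R}\,\}$ is dense in $C(H)$. For each $i$ I would approximate the \emph{fixed} function $\phi_i$ by a network $\tilde\phi_i(y)=\sum_k \beta_{ik}\,g(w_{ik}\cdot y+p_{ik})$ with $\|\phi_i-\tilde\phi_i\|_\infty<\epsilon/(2mB)$, so that, using $|f(y_i)|\le B$,
\begin{align*}
    \Big| \sum_{i=1}^m f(y_i)\big(\phi_i(y)-\tilde\phi_i(y)\big)\Big|
    \le \sum_{i=1}^m B\,\|\phi_i-\tilde\phi_i\|_\infty < \frac{\epsilon}{2}.
\end{align*}
Combining the two estimates by the triangle inequality gives $|f(y)-\sum_i f(y_i)\tilde\phi_i(y)|<\epsilon$ for all $y\in H$ and $f\in U$. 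Expanding $\tilde\phi_i$ and relabelling the double index $(i,k)$ as a single index $k$ puts the approximant in exactly the claimed form $\sum_{k=1}^K c_k(f)\,b_k$, where each $b_k(y)=g(w_k\cdot y+p_k)$ has the stated network structure and each $c_k(f)=\beta_{ik}\,f(y_i)$ is a scalar multiple of a point evaluation, hence a continuous linear functional with $K$ independent of $f$.

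I expect the main obstacle to be making $K$ and the basis $\{b_k\}$ genuinely $f$-independent while keeping the coefficients linear in $f$; this is precisely where compactness of $U$ is indispensable, since it supplies, via equicontinuity, the single modulus $\omega$ that lets one sampling mesh serve every function in the family simultaneously. The remaining steps---the partition-of-unity estimate and the network approximation of the finitely many fixed functions $\phi_i$---are routine once $U$ has been compressed to a fixed finite collection of point evaluations.
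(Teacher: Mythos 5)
The paper does not prove this lemma at all --- it is imported verbatim as Theorem 3 of the cited reference \cite{chen1995universal} --- so there is no in-paper proof to compare against. Your argument correctly reconstructs the standard proof from that reference: compactness of $U$ gives uniform boundedness and a common modulus of continuity, the partition-of-unity sampling step reduces every $f\in U$ to finitely many point evaluations (which are the continuous linear functionals $c_k$), and the finitely many fixed functions $\phi_i$ are then approximated by single-layer networks. The only point worth flagging is that the density of $\spa\{g(w\cdot y+p): w\in\mathbb{R}^d,\,p\in\mathbb{R}\}$ in $C(H)$ for $d>1$ is not literally the definition of a Tauber--Wiener function (which is one-dimensional) but is itself Theorem 2 of \cite{chen1995universal}; citing that result explicitly closes the one small gap in your write-up.
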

The next lemma extends the above approximation to functionals defined on a compact subspace of continuous functions.
\begin{lemma}[\cite{chen1995universal}, Theorem 4]
    Suppose that $Y$ is a Banach space, $K\subset Y$ is compact, and $U\subset C(K)$ is also compact.
    Let $f$ be a continuous functional on $U$. For any $\epsilon>0$, there exist weights $c_k$, $\{y_i\}_{i = 1}^I\subset K$ and networks $b_k:\mathbb{R}^{I}\rightarrow \mathbb{R}$ such that,
    \begin{align*}
        \left|f(u) - \sum_{k = 1}^K c_kb_k(\hat{u} ) \right| <\epsilon,
    \end{align*}
    for all $u\in V$, here $\hat{u} = [u(y_1), ..., u(y_I)]^\top$. Specifically, $b_k$ has a structure:
    \begin{align}
        b_k(\hat{u}) = g(w_k\cdot\hat{u}+p_k),
        \label{equation_network_functional}
    \end{align}
    where $w_k\in\mathbb{R}^I$, $p_k\in\mathbb{R}$ and $g$ is a Tauber-Wiener function.
    \label{chen_theorem4}
\end{lemma}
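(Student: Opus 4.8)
The plan is to reduce this infinite-dimensional functional approximation to a finite-dimensional function approximation and then invoke the defining property of Tauber--Wiener activations. The overall structure is \emph{sample, reconstruct, approximate}: first replace a function $u\in U$ by finitely many of its point values $\hat u=(u(y_1),\dots,u(y_I))$ in a way that loses only a controlled amount of information uniformly over $U$; then build a continuous real-valued function on a compact subset of $\mathbb{R}^I$ whose value at $\hat u$ is close to $f(u)$; and finally approximate that function by a single-hidden-layer network of the form $\sum_k c_k\,g(w_k\cdot\hat u+p_k)$.

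First I would carry out the sampling step, where the compactness hypotheses do the real work. Since $U\subset C(K)$ is compact, the Arzel\`a--Ascoli theorem shows that $U$ is uniformly bounded and equicontinuous, and since $f$ is continuous on the compact set $U$ it is uniformly continuous, so I fix $\delta>0$ with $\|u-v\|_{C(K)}<\delta\Rightarrow|f(u)-f(v)|<\epsilon/2$. Using equicontinuity I choose a radius $r$ so that $\|y-y'\|_Y<r$ forces $|u(y)-u(y')|<\delta/2$ for every $u\in U$ simultaneously, and use compactness of $K$ to extract a finite cover by balls of radius $r$ centered at $y_1,\dots,y_I$; these are the sensors. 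On the compact metric space $K$ I build a continuous partition of unity $\{\phi_i\}_{i=1}^I$ subordinate to this cover (for instance $\phi_i(y)=\psi(\|y-y_i\|_Y)/\sum_j\psi(\|y-y_j\|_Y)$ for a bump $\psi$ supported on $[0,r)$) and define the bounded linear reconstruction $\Pi:\mathbb{R}^I\to C(K)$ by $\Pi(\hat u)=\sum_{i=1}^I\hat u_i\,\phi_i$. Equicontinuity then yields $\sup_{u\in U}\|u-\Pi(\hat u)\|_{C(K)}<\delta$, because at each $y$ only sensors with $\|y-y_i\|_Y<r$ contribute, and each such $u(y_i)$ lies within $\delta/2$ of $u(y)$.

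Next I would pass to the finite-dimensional functional and close the argument. Because $\Pi(\hat u)$ need not lie in $U$, I first extend $f$ to a bounded continuous functional $F$ on all of $C(K)$ via the Tietze extension theorem, and set $\tilde f:=F\circ\Pi:\mathbb{R}^I\to\mathbb{R}$, which is continuous since $\Pi$ is bounded linear and $F$ is continuous. The sampling map $T(u)=\hat u$ is continuous from $C(K)$ to $\mathbb{R}^I$, so $T(U)\subset\mathbb{R}^I$ is compact, and the reconstruction bound together with the uniform continuity of $F$ gives $|\tilde f(\hat u)-f(u)|=|F(\Pi(\hat u))-F(u)|<\epsilon/2$ for all $u\in U$. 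Finally, applying Lemma \ref{chen_theorem3} to the single continuous function $\tilde f$ on the compact set $T(U)\subset\mathbb{R}^I$ (equivalently, the defining density property of the Tauber--Wiener function $g$) produces an integer $K$ and constants $c_k\in\mathbb{R}$, $w_k\in\mathbb{R}^I$, $p_k\in\mathbb{R}$ with $|\tilde f(\hat u)-\sum_{k=1}^K c_k\,g(w_k\cdot\hat u+p_k)|<\epsilon/2$ on $T(U)$; setting $b_k(\hat u)=g(w_k\cdot\hat u+p_k)$ and combining the two estimates by the triangle inequality yields the claimed bound for every $u\in U$.

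The main obstacle is the sampling step: everything hinges on upgrading compactness of $U$ to a \emph{uniform} reconstruction guarantee, i.e.\ on the fact that one finite set of sensors suffices for all input functions at once, which is exactly what Arzel\`a--Ascoli equicontinuity buys. A secondary technical point is that $\Pi(\hat u)$ may leave $U$, so $f(\Pi(\hat u))$ is a priori undefined; I resolve this with the Tietze extension above, any bounded continuous extension serving the purpose, after which the reduction to the classical Euclidean approximation theorem for Tauber--Wiener networks is routine. I would also remark that here the coefficients $c_k$ are genuine constants rather than functionals of $f$, since a single fixed functional is being approximated.
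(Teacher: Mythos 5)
The paper does not prove this lemma; it is imported verbatim as Theorem 4 of the cited Chen--Chen reference. Your sample--reconstruct--approximate argument is essentially the original proof of that theorem, and it is correct in substance: equicontinuity via Arzel\`a--Ascoli gives the uniform sensor/partition-of-unity reconstruction, Tietze handles the fact that $\Pi(\hat u)$ may leave $U$, and the finite-dimensional Tauber--Wiener density closes the argument. The one step you should tighten is the claim that ``uniform continuity of $F$'' yields $|F(\Pi(\hat u))-F(u)|<\epsilon/2$: the Tietze extension $F$ is only continuous on the non-compact space $C(K)$, and invoking its uniform continuity on $U\cup\Pi(T(U))$ is circular, since that compact set depends on the sensors while the sensors must be chosen after $\delta$. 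The standard repair is to fix $\delta$ using only $F$ and the compact set $U$ --- continuity of $F$ at each point of $U$ plus a finite subcover gives a single $\delta>0$ with $|F(v)-F(u)|<\epsilon/2$ whenever $u\in U$ and $\|v-u\|_{C(K)}<\delta$, with $v$ ranging over all of $C(K)$ --- and only then choose the sensors so that $\|u-\Pi(\hat u)\|_{C(K)}<\delta$ uniformly on $U$ (Chen--Chen instead prove that $U$ together with all its reconstructions forms a compact set). With that quantifier order fixed, your proof is complete; the final appeal to Lemma~\ref{chen_theorem3} applied to the single function $\tilde f$ on the compact set $T(U)\subset\mathbb{R}^I$ is fine.
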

Without loss of generality, we consider two clients ($C=2$) in the remainder of this paper. Let $G: V \rightarrow U$ be a continuous operator. We assume that $G_1: V_1\rightarrow U$ and $G_2: V_2\rightarrow U$ are the restrictions of $G$ on $V_1$ and $V_2$ respectively. Specifically, $G(u) = G_1(u)$ for any $u\in V_1$ and $G(u) = G_2(u)$ for any $u\in V_2$. We will construct the universal approximation for both $G_1$ and $G_2$ using a shared output function basis. Let us make the following assumptions.
\begin{assumption}
\label{assumption_assumption1}
We make the following assumptions on the input/output function spaces and the operator.
    \begin{enumerate}
        \item $V_1\subset C(K_1)$, $V_2\subset C(K_2)$ are compact and are defined on compact domains $K_1\subset K_1'$ and $K_2\subset K_2'$, where $K_1'$ and $K_2'$ are Banach spaces. Additionally, $V = V_1\cup V_2$.
        \item Let $G$, $G_1$ and $G_2$ are continuous, and $G(u) = G_1(u)$ for any $u\in V_1$ and $G(u) = G_2(u)$ for any $u\in V_2$.
        \item $U = G_1(V_1)\cup G_2(V_2)$ \textcolor{black}{has a compact domain} $K_x\subset \mathbb{R}^{d}$.
    \end{enumerate}
\end{assumption}
The following theorem establishes the universal approximation of continuous nonlinear operators.
\begin{theorem}
    Let $G: V\rightarrow U$, $G_1: V_1\rightarrow U$ and $G_2: V_2\rightarrow U$ satisfy the Assumption \ref{assumption_assumption1}. For any $\varepsilon>0$,  there exist weights $c_{i, k}^1, c_{i, k}^2\in \mathbb{R}$, $[y_1, ..., y_{N_1}]^\top \subset K_1$, $[z_1, ..., z_{N_2}]^\top \subset K_2$, networks $b_{i, k}^1: \mathbb{R}^{N_1}\rightarrow\mathbb{R}$, $b_{i, k}^2: \mathbb{R}^{N_2}\rightarrow \mathbb{R}$, specified in Eqaution \ref{equation_network_functional}, and $\tau_k: K_x\rightarrow \mathbb{R}$ specified in \ref{equation_network_function} such that,
    \begin{align*}
        & |G_1(u)(x) - \sum_{k = 1}^{K} \sum_{i = 1}^{I_1}c_{i, k}^1b_{i, k}^1(\hat{u}_1) \tau_k(x)|<\varepsilon, \\
        & |G_2(u)(x) - \sum_{k = 1}^{K} \sum_{i = 1}^{I_2}c_{i, k}^2b_{i, k}^2(\hat{u}_2) \tau_k(x)|<\varepsilon, 
    \end{align*}
    for any $u_1\in U_1$ and $u_2\in U_2$, here $\hat{u}_1 = [u(y_1), ...u(y_{N_1})]^\top$ and $\hat{u}_1 = [u(z_1), ...u(z_{N_2})]^\top$.
\end{theorem}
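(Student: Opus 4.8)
The plan is to erect the shared output basis first, then fit each client's branch coefficients to that common basis, exploiting the fact that $U = G_1(V_1)\cup G_2(V_2)$ is a \emph{single} compact set on which one basis can serve both clients. The structural heart of the argument is therefore a compactness observation: since $V_1,V_2$ are compact and $G_1,G_2$ are continuous, the images $G_1(V_1)$ and $G_2(V_2)$ are compact in $C(K_x)$, so their union $U$ is compact in $C(K_x)$ as well. It is precisely because the two clients' output functions inhabit one compact set that a single trunk/basis can be built for both.

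Next I would apply Lemma \ref{chen_theorem3} to the compact set $U\subset C(K_x)$. This produces an integer $K$, continuous linear functionals $\{c_k\}_{k=1}^K$ on $U$, and networks $\tau_k:K_x\to\mathbb{R}$ of the form \eqref{equation_network_function} such that $|f(x)-\sum_{k=1}^K c_k(f)\tau_k(x)|<\varepsilon/2$ for every $f\in U$ and $x\in K_x$. The key point is that the \emph{same} $\tau_k$ simultaneously approximate every element of $G_1(V_1)$ and of $G_2(V_2)$; this is what encodes the D2NO design in which the trunk is universal while the branches are client-specific.

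Specializing $f=G_1(u)$ for $u\in V_1$, each coefficient $u\mapsto c_k(G_1(u))$ is the composition of the continuous functional $c_k$ with the continuous operator $G_1$, hence a continuous functional on the compact set $V_1\subset C(K_1)$. I would invoke Lemma \ref{chen_theorem4} on each of the finitely many functionals $c_k\circ G_1$, taking the union of the resulting sensor sets so that a single sensor vector $\hat{u}_1=[u(y_1),\ldots,u(y_{N_1})]^\top$ suffices for all $k$. This yields branch networks $b_{i,k}^1$ of the form \eqref{equation_network_functional} and weights $c_{i,k}^1$ with $|c_k(G_1(u))-\sum_{i=1}^{I_1}c_{i,k}^1 b_{i,k}^1(\hat{u}_1)|<\varepsilon/(2KM)$, where $M=1+\max_{k}\sup_{x\in K_x}|\tau_k(x)|<\infty$ by continuity of the $\tau_k$ on the compact domain $K_x$. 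The identical argument applied to $c_k\circ G_2$ on $V_2\subset C(K_2)$ gives sensors $\{z_j\}\subset K_2$ and branch networks $b_{i,k}^2$; because the two clients are treated independently at this stage, nothing forces $N_1=N_2$, which is the whole purpose of the construction.

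Finally I would combine the estimates by a triangle inequality, splitting $G_1(u)(x)-\sum_{k,i}c_{i,k}^1 b_{i,k}^1(\hat{u}_1)\tau_k(x)$ into the basis-approximation error (at most $\varepsilon/2$) plus $\sum_{k=1}^K|c_k(G_1(u))-\sum_i c_{i,k}^1 b_{i,k}^1(\hat{u}_1)|\,|\tau_k(x)|\le K\cdot(\varepsilon/(2KM))\cdot M\le\varepsilon/2$, giving the claimed $\varepsilon$ bound, and symmetrically for $G_2$. The main obstacle is not any individual estimate but securing the shared basis: everything hinges on applying Lemma \ref{chen_theorem3} to the union $U$ rather than to $G_1(V_1)$ and $G_2(V_2)$ separately, so that the $\tau_k$ are common to both clients while only the branch coefficients differ. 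The one genuinely technical point to handle carefully is the union-of-sensors step, ensuring that $N_1$ and $N_2$ remain well defined after merging sensor sets across the $K$ functionals and that each $b_{i,k}^1$ (resp. $b_{i,k}^2$) is interpreted as taking the full merged sensor vector as input.
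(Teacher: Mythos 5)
Your proposal is correct and follows essentially the same route as the paper: a shared trunk basis from Lemma \ref{chen_theorem3} applied to the compact output set, client-specific branch approximations of the coefficient functionals from Lemma \ref{chen_theorem4}, and a triangle-inequality combination weighted by a bound on $\sup_{x}|\tau_k(x)|$. Your explicit justification that $U$ is compact and your care with merging sensor sets across the $K$ functionals are slightly more detailed than the paper's write-up, but the argument is the same.
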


\begin{proof}
Without loss of generality, we will prove the approximation for the first clients.
For any $\frac{\varepsilon}{2}>0$, by Lemma \ref{chen_theorem3},
there exist $K$ networks $\tau_k(x)$, functional $a_k(\cdot)$ on $V$ such that,
\begin{align}
    |G(u)(x) - \sum_{k = 1}^{K}a_k\left(G(u)\right)\tau_k(x)|<\frac {\varepsilon}{2},
    \label{equation_eqn1}
\end{align}
for any $u\in V$ and $x\in K_x$.
Since $G_1(u) = G(u)$ for $u\in V_1$, it follows that $|G_1(u)(x) - \sum_{k = 1}^{K}a_k\left(G_1(u)\right)\tau_k(x)|<\frac {\varepsilon}{2}$.
Denote $L = \sum_{k = 1}^{K} \sup\limits_{x\in K_x}|\tau_k(x)|$, 
it follows by Lemma \ref{chen_theorem4} that, for $\frac{\varepsilon}{2L}$ and any $k = 1, ..., K$, there exist weights $c^1_{i, k}$, $[y_1, ..., y_{N_1}]\subset K_1$ and networks $b^1_{i, k}: \mathbb{R}^{N_1}\rightarrow \mathbb{R}$ such that,
\begin{align}
    |a_k\left(G(u)\right) - \sum_{i = 1}^{I_1}c^1_{i, k}b^1_{i, k}(\hat{u}_1)|<\frac{\varepsilon}{2L}, \label{equation_eqn2}
\end{align}
where $\hat{u}_1 = [u(y_1), ..., u(y_{N_1})]^\top$. Let $I_1$ large enough and set $c_{i, k}^1 = 0$ if necessary,
substitute Equation \ref{equation_eqn2} into Equation \ref{equation_eqn1}, it follows that,
\begin{align*}
    |G_1(u)(x) - \sum_{k = 1}^{K} \sum_{i = 1}^{I_1}c_{i, k}^1b_{i, k}^1(\hat{u}_1) \tau_k(x)|<\varepsilon,
\end{align*}
for any $u\in V_1$ and $x\in K_x$. The approximation for $G_2$ is similar.
\end{proof}
\section{Numerical Experiments}
\label{sec_numerical}
In this section, we use four numerical experiments to demonstrate the effectiveness of D2NO in approximating nonlinear operators with heterogeneous input function spaces.

\subsection{Viscous Burgers' equation}
We begin by studying the viscous Burgers' equation. Our main objective is to understand and approximate the operator that maps the initial condition of the PDE to the solution at the terminal time. Specifically, the viscous Burgers' equation is defined as follows:
\begin{align}
&\frac{\partial u_s}{\partial t} + \frac{1}{2}\frac{\partial (u^2_s)}{\partial x} = \alpha \frac{\partial^2 u_s}{\partial x^2},\hspace{0.2em} x\in[0, 2\pi], \hspace{0.2em} t\in[0, T],
\label{eqn_vburgers}
\end{align}
with the initial condition $u_s(x, 0) = u^0_s(x)$ and the boundary conditions $u_s(0, t) = u_s(2\pi, t)$. Here, $u^0_s(x)$ is the initial condition that depends on the parameter $s$, and $\alpha$ is the viscosity of the equation. Our goal is to learn the mapping from $u^0_s$ to $u_s(t)$.

\subsubsection{Heterogeneous input functions with double sharp peaks}
Firstly, we focus our attention on the challenging scenario described in Figure~\ref{fig_different_domain}, where input functions display heterogeneous properties. Specifically, these input functions exhibit distinct sharp peaks, which are concentrated in two separate regions (depicted by the black and magenta curves in Figure~\ref{fig_different_domain}).

\begin{figure}
    \centering
    \includegraphics[scale = 0.4
]{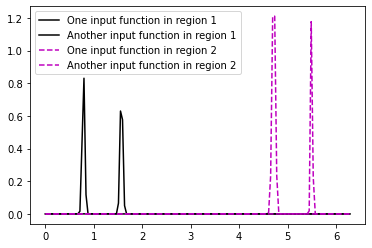}
\includegraphics[scale = 0.4]{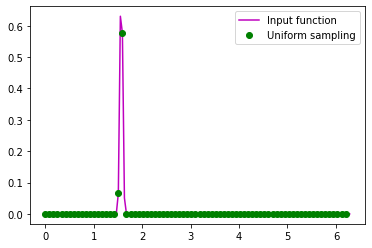}
\includegraphics[scale = 0.4]{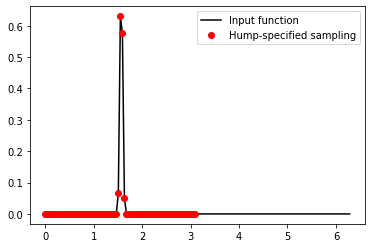}
    \caption{Left: the input functions exhibit peaks, with these peaks being concentrated in two distinct regions (black and magenta). Such a dataset poses significant challenges to the conventional operator learning framework, which mandates uniform sensor placement for all input functions. This is due to the necessity for dense sampling to effectively capture the sparsely distributed peaks. However, this approach results in a substantial increase in problem dimensionality, leading to the presence of numerous redundant entries in the discretized vectors, please check the green dots in the middle image. }
    \label{fig_different_domain}
\end{figure}
In the traditional operator learning or DeepONet framework, uniform sensor placement is required for all input functions. Thus, only a limited number of sensors are used to sample the peak of one input function, and most of these sensors provide the same function values. This uniformity of data significantly affects the quality of training.

By using Algorithm~\ref{algo_sharing_trunk}, we can use two clients ($C=2$) and two dedicated branch networks to capture the input functions individually in the clustered regions. As a result, we can deploy separate sensor sets for these two datasets. Figure \ref{fig_different_domain} illustrates how a function from the left region is sampled using dedicated sensors tailored to that specific region. In contrast, Figure \ref{fig_different_domain} shows the same function sampled using uniform sensors for comparison.

To evaluate the performance of D2NO (see Algorithm~\ref{algo_sharing_trunk}), we trained a total of 50 independent D2NO models and calculated the average $L_2$-relative error. We compared D2NO with the traditional DeepONet model. Specifically, we trained 50 traditional DeepONet models using the same network structure and hyperparameters as D2NO. It is important to note that for the traditional DeepONet, we used uniform sensors for all input functions. Table \ref{table_fed_error} presents a summary of the results, which demonstrate that the proposed D2NO algorithm significantly reduces relative errors while maintaining a comparable computational cost.
\begin{table}[H]
\centering
\begin{tabular}{||c |c| c ||} 
\hline
& DeepONet & D2NO \\
\hline
$L_2$-relative error & $18.64\%$ & $8.10\%$ \\
\hline
\end{tabular}
\caption{The average $L_2$-relative error computed from 50 independent training runs for both the DeepONet and D2NO described in Algorithm~\ref{algo_sharing_trunk}. As a reference, we predict by the average of the training labels, the relative error is $97.59\%$.
}
\label{table_fed_error}
\end{table}

\subsubsection{Heterogeneous input functions with different properties}
In this experiment, we explore the scenario depicted in Figure \ref{fig_different_geom}. This illustration reflects many other situations where using different numbers of sensors for different input functions can result in improved solutions. Previous attempts have focused on expanding the capabilities of neural operators to discretize different input functions in different ways. However, these efforts have been limited by the need for an equal number of sensors. Our new algorithm overcomes this limitation by allowing the use of varying quantities of sensors for function discretization.

In our example, we present two different types of input functions: (a) functions with humps, represented by the black curve in Figure \ref{fig_different_geom}, and (b) functions with relatively smooth profiles, represented by the magenta curve in the same figure. Typically, the conventional approach involves using the same number of sensors for all functions. However, due to the presence of humps in the first function category, a larger number of sensors is required for both types of functions. The numerical examples demonstrate that DeepONet-style networks can effectively handle challenges presented by smooth input functions, even with a limited number of sensors. However, for functions with peaks, a higher number of sensors is required, which increases the computational cost when dealing with smoother functions.

Since the dedicated branch networks are not shared among clients, our algorithm enables the creation of separate branch networks for these two function classes. As a result, the sensor count and sensor placements can vary depending on the specific function being considered. An illustration of this is in Figure \ref{fig_different_geom}.

\begin{figure}[H]
    \centering
\includegraphics[scale = 0.34]{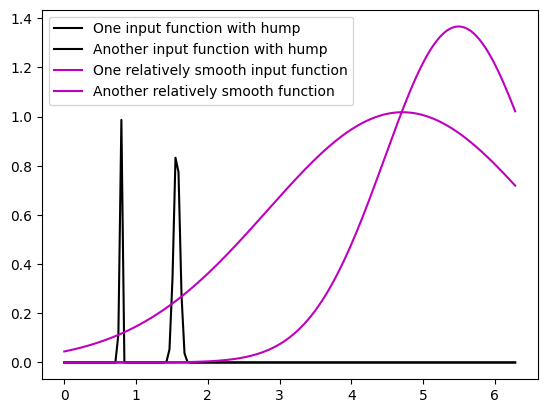}
\includegraphics[scale = 0.34]{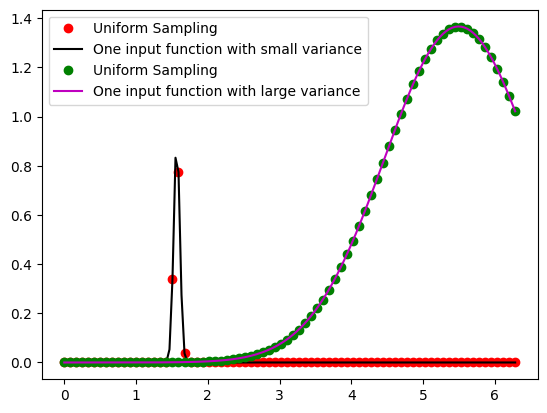}
\includegraphics[scale = 0.34]{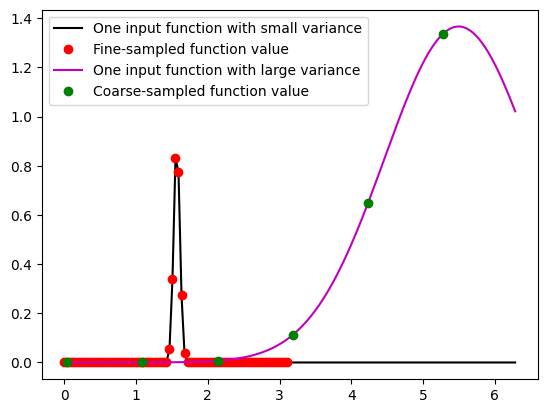}
    \caption{Left: illustrations of functions exhibiting varied degrees of smoothness. The black curves represent two prototypes from a category of input functions characterized by sharp peaks, while the magenta curves display two instances of smooth functions.
Right: presentation of different sensor counts for discretizing functions depicted in the left visual. The functions with humps are finely discretized with $75$ sensors, whereas the smooth magenta function is captured using only $6$ sensors (indicated by the green dots). Middle: uniform sampling.}
    \label{fig_different_geom}
\end{figure}

We use two clients, each with its own dedicated branch network. For functions that have peak characteristics, we use a larger two-layer branch network with dimensions $75\times 100\rightarrow 100\times 1$. On the other hand, for smoother functions, we employ a smaller two-layer branch network with dimensions $6\times 50\rightarrow 50\times 1$. Thus, the proposed D2NO requires the use of 75 sensors and 6 sensors, respectively.

Note that our proposed distributed algorithm (D2NO) adjusts the sensor count for different input function spaces, unlike the traditional DeepONet approach which uses a uniform configuration of 75 sensors for all input functions. Table \ref{table_fed_geom} presents the aggregated results after training 50 independent models using both the traditional DeepONet and the proposed strategy.
\begin{table}[H]
\centering
\begin{tabular}{||c | c| c ||} 
\hline
& DeepONet & D2NO\\
\hline
$L_2$-relative error  &$7.53\%$ & $5.34\%$ \\
\hline
\# Unshared parameters & $77K$ & $41.5K$ \\
\hline
\# Shared parameters  & $51.8K$ & $51.8K$ \\
\hline
\end{tabular}
\caption{The average L2-relative error of 50 independent trainings for DeepONet and D2NO (see Algorithm \ref{algo_sharing_trunk}). Two dedicated branch networks with distinct structures are used to capture hump input functions and smooth input functions, respectively. Note that this approach helps to reduce the number of trainable parameters. As a reference, if we were to predict the solution by the average of the training samples, the relative error would be $810.61\%$.}
\label{table_fed_geom}
\end{table}

\subsection{The Nonlinear Pendulum System} \label{subsec:pendulum-example}
In this experiment, we use the proposed distributed strategy D2NO to approximate the solution operator of the nonlinear pendulum system with dynamics:
\begin{equation}
\begin{aligned}   \label{eq:nonlinear-pendulum}
\frac{ds_1(t)}{dt} &=  s_2(t), \\
\frac{ds_2(t)}{dt} &= -k \sin(s_1(t)) + u(t).
\end{aligned}
\end{equation}
Here, $s(t) = (s_1(t), s_2(t))^\top$ is the state defined over the time domain $t \in [0,1]$ and $u(t)$ is the external force. We aim to approximate the solution operator~$G: u(t) \mapsto s(t)$ for a fixed initial condition $(s_1(0), s_2(0)) = (0.0, 0.0)$, $k=1.0$, and an external force~$u(t)$ sampled from the mean-zero Gaussian Random Field (GRF):
\begin{align} \label{eq:GRF}
    u \sim \mathcal{G}(0, \kappa_{\ell}(x_1, x_2)),
\end{align}
where the covariance kernel $\kappa_{\ell}(x_1,x_2) = \exp\left(-\|x_1 - x_2 \|^2 / 2 \ell^2\right)$ is the radial-basis function (RBF) with length-scale parameter~$\ell > 0$, which we will use to generate heterogeneous input function spaces, as described next. 

\subsubsection{Heterogeneous input functions with different frequencies}
We first consider two input function spaces with different frequencies. To this end, we let the first input space~$V_1$ be a GRF with length scale $\ell = 0.1$ and the second input space~$V_2$ be a GRF with length scale $\ell = 1.0$. Figure~\ref{fig:pendulum-different-freqs-DON} illustrates sample functions from these two input spaces. The traditional DeepONet framework requires placing uniformly a collection of $m$ sensors to discretize the input function~$u$. In general, as depicted in Figure~\ref{fig:pendulum-different-freqs-DON}, a large number of sensors (e.g., $m=100$) is required to capture effectively the input function~$u$, which, in turn, increases the training cost. 

\begin{figure}
    \centering
\includegraphics[scale = 0.4
]{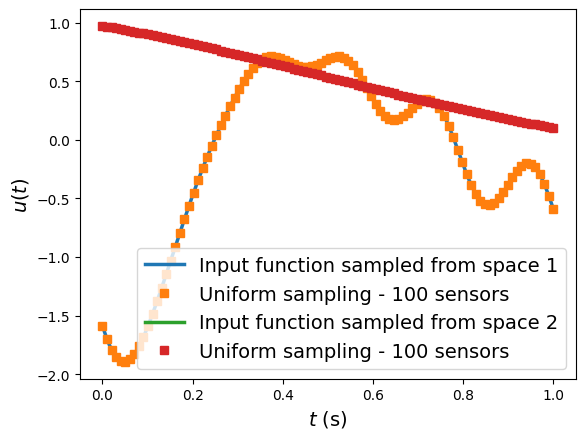}
\includegraphics[scale = 0.4]{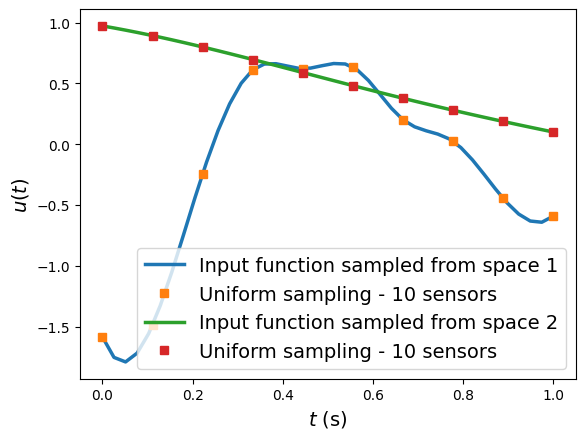}
    \caption{\textit{(Left):} Input functions discretized with uniform sampling and $m=100$ sensors. \textit{(Right):} Input functions discretized with uniform sampling and $m=10$ sensors. The input functions exhibit different frequencies. Such a dataset poses significant challenges to the conventional operator learning framework, which mandates uniform sensor placement for all input functions. This is due to the necessity for dense sampling to effectively capture the oscillatory behavior. However, this approach results in a substantial increase in problem dimensionality, leading to the presence of numerous redundant entries in the discretized vectors.}
    \label{fig:pendulum-different-freqs-DON}
\end{figure}

We note that for the input function sampled from the space~$V_2$, using $m=100$ sensors is excessive. One may require only $m=10$ sensors (see Figure~\ref{fig:pendulum-different-freqs-DON}) to effectively capture input functions sampled from~$V_2$. However, as illustrated in Figure~\ref{fig:pendulum-different-freqs-DON}, using less number of sensors will deteriorate our ability to describe the oscillations from the input function sampled from the input space~$V_1$. The proposed distributed framework D2NO can effectively tackle the above problem by using two clients $(C=2)$, each with dedicated branch networks, as depicted in Figure~\ref{fig:pendulum-different-freqs-Federated}. More specifically, the first client uses a branch network with $m=100$ sensors to capture and discretize input functions sampled from the input space~$V_1$. The second client uses a branch network with $m=10$ sensors to capture and discretize input functions sampled from~$V_2$. We let the two clients share and synchronize the trunk network. 

\begin{figure}[H]
    \centering
\includegraphics[scale = 0.5
]{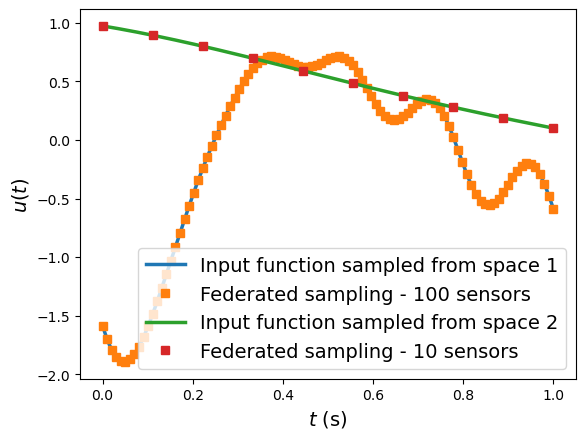}
    \caption{Input functions discretized with federated sampling. A client with dedicated branch networks uses $m=100$ sensors to discretize the input function sampled from a GRF with length-scale $\ell = 0.1$. Another client with a dedicated branch network uses $m=10$ sensors to discretize the input function sampled from a GRF with length-scale $\ell = 1.0$.}
    \label{fig:pendulum-different-freqs-Federated}
\end{figure}

\textit{Training and Testing.} To maintain a fair comparison between the traditional DeepONet and D2NO, we employ the same neural network architectures except for the dedicated branch Networks. We sample $N_{V_1} = N_{V_2} = 1,000$ input trajectories from each input space to train the traditional DeepONet and D2NO. We train the traditional DeepONet using stochastic gradient-based optimization for 1000 epochs. Finally, we train each model within the proposed D2NO approach using 10 synchronization rounds after 100 local stochastic gradient updates. To test the proposed framework, we sample $100$ test trajectories from $V_1$ and $100$ test trajectories from $V_2$.

\textit{Performance of D2NO.} Table~\ref{table:pendulum} shows the performance (i.e., in terms of the $L_2-$ relative error) of the proposed D2NO and the traditional DeepONet (with $m=100$ and $m=10$ sensors) for the test trajectories sampled from the input spaces $V_1$ and $V_2$. The results clearly illustrate that the proposed D2NO framework is competitive with the traditional DeepONet with $m=100$ sensors and vastly outperforms the more efficient DeepONet with $m=10$ sensors.  

\begin{table}[H]
\centering
\begin{tabular}{||c | c | c| c ||} 
\hline
& DeepONet ($m=100$) & DeepONet ($m=10$) & D2NO \\
\hline
$L_2-$relative error on $U_1$ & $3.27\%$ & $16.64\%$ & $5.21\%$ \\
\hline
$L_2-$relative error on $U_2$ & $1.18\%$ & $3.21\%$ & $3.45\%$ \\
\hline
\end{tabular}
\caption{The average $L_2-$relative error for $100$ test trajectories sampled from the input space~$V_1$ (GRF with length-scale $\ell = 0.1$) and for $100$ test trajectories sampled from the input space~$V_2$ (GRF with length-scale $\ell = 1.0$), using the traditional DeepONet ($m=100$ and $m=10$ sensors) and the distributed framework D2NO detailed in Algorithm~\ref{algo_sharing_trunk}.
}
\label{table:pendulum}
\end{table}

\textit{Using Distinct Number of Training Samples}. Here, we test the proposed D2NO framework on a scenario where the number of training trajectories sampled from $V_1$ is $N_{V_1} = 200$ and the number of training trajectories sampled from $V_2$ is $N_{V_2} = 1,800$. \textcolor{black}{Table~\ref{table:pendulum-different-No-train}} describes the performance of the proposed D2NO and the traditional DeepONet. The results clearly illustrate the advantages of the proposed D2NO framework. 

\begin{table}[H]
\centering
\begin{tabular}{||c | c | c| c ||} 
\hline
& DeepONet ($m=100$) & DeepONet ($m=10$) & D2NO \\
\hline
$L_2-$relative error on $U_1$ & $6.34\%$ & $21.23\%$ & $7.08\%$ \\
\hline
$L_2-$relative error on $U_2$ & $0.78\%$ & $2.56\%$ & $3.01\%$ \\
\hline
\end{tabular}
\caption{The average $L_2-$relative error for $100$ test trajectories sampled from the input space~$V_1$ (GRF with length-scale $\ell = 0.1$) and for $100$ test trajectories sampled from the input space~$V_2$ (GRF with length-scale $\ell = 1.0$), using the traditional DeepONet ($m=100$ and $m=10$ sensors) and the distributed framework D2NO detailed in Algorithm~\ref{algo_sharing_trunk}. The frameworks were trained using $N_{V_1} = 200$ input trajectories sampled from $V_1$ and $N_{V_2} = 1,800$ input trajectories sampled from $V_2$.
}
\label{table:pendulum-different-No-train}
\end{table}

\section{Conclusion} \label{sec: conclusion}
In this paper, we have developed a dedicated distributed training strategy for deep neural operators (D2NO) that can efficiently handle heterogeneous input function spaces. Our strategy involves partitioning the input function space based on the properties and regularity of the input functions. Each subset of the input function space is then processed using a dedicated branch network, which is trained locally. Additionally, a consensus trunk network is trained in a centralized manner. This trunk network acts as a global basis for the local coefficients obtained from the dedicated branch networks. We have demonstrated that D2NO serves as a universal approximator of continuous nonlinear operators. Finally, we have demonstrated that D2NO not only addresses the challenges posed by input functions with heterogeneous properties but also offers improved flexibility and computational efficiency using four numerical examples, each with heterogeneous input function spaces.

\section{Acknowledgement}
Z. Zhang was supported in part by AFOSR MURI FA9550-21-1-0084. H. Schaeffer was supported in part by AFOSR MURI FA9550-21-1-0084, NSF DMS-2331033.
G. Lin is supported by the National Science Foundation (DMS-2053746, DMS-2134209, ECCS-2328241, and OAC-2311848), and U.S. Department of Energy (DOE) Office of Science Advanced Scientific Computing Research program DE-SC0021142 and DE-SC0023161.
L. Lu was supported by the U.S. Department of Energy [DE-SC0022953].

\bibliographystyle{abbrv}
\bibliography{references}
\end{document}